\newtheorem{theorem}{Theorem}[section]
\newtheorem{proposition}[theorem]{Proposition}
\newtheorem{lemma}[theorem]{Lemma}
\newtheorem{corollary}[theorem]{Corollary}
\theoremstyle{definition}
\newtheorem{definition}[theorem]{Definition}
\begin{document}

\title{A Positive Answer to B\'ar\'any's Question on Face Numbers of Polytopes}

\author{Joshua Hinman\\
\small Department of Mathematics\\
\small University of Washington\\
\small Seattle, WA 98195-4350, USA\\
\small \texttt{joshrh@uw.edu}}
\date{}

\maketitle

\begin{abstract}
    Despite a full characterization of the face vectors of simple and simplicial polytopes, the face numbers of general polytopes are poorly understood. Around 1997, B\'ar\'any asked whether for all convex $d$-polytopes $P$ and all $0 \leq k \leq d-1$, $f_k(P) \geq \min\{f_0(P), f_{d-1}(P)\}$. We answer B\'ar\'any's question in the affirmative and prove a stronger statement: for all convex $d$-polytopes $P$ and all $0 \leq k \leq d-1$,
    \[
    \frac{f_k(P)}{f_0(P)} \geq \frac{1}{2}\biggl[{\lceil \frac{d}{2} \rceil \choose k} + {\lfloor \frac{d}{2} \rfloor \choose k}\biggr],
    \qquad
    \frac{f_k(P)}{f_{d-1}(P)} \geq \frac{1}{2}\biggl[{\lceil \frac{d}{2} \rceil \choose d-k-1} + {\lfloor \frac{d}{2} \rfloor \choose d-k-1}\biggr].
    \]
    In the former, equality holds precisely when $k=0$ or when $k=1$ and $P$ is simple. In the latter, equality holds precisely when $k=d-1$ or when $k=d-2$ and $P$ is simplicial.
\end{abstract}

\section{Introduction}

This paper investigates the face numbers of polytopes. If $P$ is a $d$-polytope and $0 \leq k \leq d-1$, we define the \emph{$k^\text{th}$ face number} $f_k(P)$ as the number of $k$-dimensional faces of $P$, and the \emph{face vector} or \emph{$f$-vector} as $(f_0(P), \ldots, f_{d-1}(P))$. While easily defined and extensively studied, face numbers still remain highly mysterious.

Face numbers are best understood in the case of simple and simplicial polytopes. In 1971, McMullen published the famous \emph{$g$-conjecture}, a proposed set of necessary and sufficient conditions for a vector $(f_0, \ldots, f_{d-1})$ to be the $f$-vector of some simplicial polytope \cite{mcmullen71}. Billera and Lee proved the sufficiency of these conditions in 1980 \cite{billera80}\cite{billera81}; Stanley proved their necessity the same year \cite{stanley80}. The resulting \emph{$g$-theorem} gives a full characterization of the $f$-vectors of simplicial, and consequently simple, polytopes.

Results on general polytopes are far more elusive. McMullen proved in 1970 that for a fixed dimension $d$ and number of vertices $v > d$, the \emph{cyclic polytope} $C(v, d)$ simultaneously maximizes all face numbers \cite{mcmullen70}. Some is also known about \emph{flag $f$-numbers}, which count the chains of faces $\varnothing \subset G_1 \subset \cdots \subset G_s \subset P$ in a fixed sequence of dimensions. Significant results include Kalai's rigidity inequality for flag $f$-vectors \cite[Theorem 1.4]{kalai87}, as well as Bayer's inequalities for flag $f$-vectors of 4-polytopes \cite{bayer87}.

The greatest hole in our understanding of face numbers is the question of lower bounds on $f_k(P)$. It is well known that for a fixed dimension $d$, the $d$-simplex $\Delta^d$ simultaneously minimizes all face numbers. The most general lower bound result to date is a recent theorem of Xue \cite{xue20}, originally conjectured by Gr\"unbaum in 1967 \cite{grunbaum03}: if $P$ is a $d$-polytope with $f_0(P) = d + s \leq 2d$, then
\[
f_k(P) \geq {d+1 \choose k+1} + {d \choose k+1} - {d+1-s \choose k+1}.
\]

What if we do not know that $f_0(P) \leq 2d$? B\'ar\'any asked (see \cite[(3.4)]{barany98}\cite[Problem 15.6.5]{billera97}) whether for all $d$-polytopes $P$ and all $0 \leq k \leq d-1$, $f_k(P) \geq \min\{f_0(P), f_{d-1}(P)\}$. We aim to resolve B\'ar\'any's question with our main result, a general lower bound on $f_k(P)$ given $f_0(P)$ or $f_{d-1}(P)$. For all integers $0 \leq k < d$, we define
\[
\rho(d, k) = \frac{1}{2}\biggl[{\lceil \frac{d}{2} \rceil \choose k} + {\lfloor \frac{d}{2} \rfloor \choose k}\biggr].
\]
Our result is as follows: for all $d$-polytopes $P$ and $0 \leq k \leq d-1$,
\begin{align}
\frac{f_k(P)}{f_0(P)} &\geq \rho(d, k), \label{eqn:bound}\\
\frac{f_k(P)}{f_{d-1}(P)} &\geq \rho(d, d-k-1). \label{eqn:dual}
\end{align}
Note that (\ref{eqn:bound}) is nontrivial when $k \leq \lceil \frac{d}{2} \rceil$, while (\ref{eqn:dual}) is nontrivial when $k \geq \lfloor \frac{d}{2} \rfloor - 1$.

Our result answers B\'ar\'any's question and does better. We show that if $P$ is a $d$-polytope and $0 \leq k \leq \lfloor \frac{d}{2} \rfloor$, then $f_k(P) \geq f_0(P)$; likewise, if $\lceil \frac{d}{2} \rceil - 1 \leq k \leq d-1$, then $f_k(P) \geq f_{d-1}(P)$. Notably, if $d$ is odd and $k = \frac{d-1}{2}$, then $\frac{f_k(P)}{f_0(P)}, \frac{f_k(P)}{f_{d-1}(P)} \geq \rho(d, k) = \frac{k}{2} + 1$.

Bounds (\ref{eqn:bound}) and (\ref{eqn:dual}) are strongest when $k \approx \frac{d}{4}$ and $k \approx \frac{3d}{4}$, respectively. These values evoke Bj\"orner's results on the partial unimodality of $f$-vectors \cite{bjorner94}. Bj\"orner proved that for simplicial $d$-polytopes, $f_0 < \cdots < f_{\lfloor \frac{d}{2} \rfloor - 1} \leq f_{\lfloor \frac{d}{2} \rfloor}$ and $f_{\lfloor \frac{3(d-1)}{4} \rfloor} > \cdots > f_{d-1}$. Dually, for simple $d$-polytopes, $f_0 < \cdots < f_{\lceil \frac{d-1}{4} \rceil}$ and $f_{\lceil \frac{d}{2} \rceil - 1} \geq f_{\lceil \frac{d}{2} \rceil} > \cdots > f_{d-1}$. In fact, Bj\"orner proved that these are the strongest unimodality results possible for simplicial and simple polytopes.

Our proof relies on \emph{solid angles}, a generalization of angles in arbitrary finite dimension. Our primary information on solid angles is due to Perles and Shephard \cite{perles67}\cite{shephard68}. For us, the key feature distinguishing the boundary of a $d$-polytope from an arbitrary $(d-1)$-complex is that of angle deficiencies where several facets meet. Two results of Perles and Shephard (see Theorems \ref{perles}-\ref{curvature} below) allow us to use this notion in proving (\ref{eqn:bound}) and (\ref{eqn:dual}).

The structure of our paper is as follows. Section \ref{s:preliminaries} provides the background for our work. We include discussion of polytopes (\S\ref{subs:polytopes}) and solid angles (\S\ref{subs:angles}), as well as a technical lemma (\S\ref{subs:combinatorics}). Section \ref{s:result} is dedicated to our main result. We first prove a key proposition on angle sums (Proposition \ref{angles}). Next, we prove (\ref{eqn:bound}) and (\ref{eqn:dual}) and characterize the cases where equality holds (Theorem \ref{bound}). Finally, we answer B\'ar\'any's question (Corollary \ref{barany}).


\section{Preliminaries}
\label{s:preliminaries}

In this section, we introduce the concepts and first results needed for our proof of (\ref{eqn:bound}) and (\ref{eqn:dual}). Most of these preliminaries revolve around polytopes, polytopal complexes, and solid angles. We also include a lemma about binomial coefficients.

\subsection{Polytopes and Polytopal Complexes}
\label{subs:polytopes}

We begin by introducing some key notions about polytopes and polytopal complexes. All polytopes are assumed to be convex. For any undefined terminology, we refer the reader to \cite{grunbaum03}\cite{ziegler95}.

\begin{definition}
For nonnegative integers $d$, a \emph{$d$-polytope} $P$ is the $d$-dimensional convex hull of a finite point set in some real vector space. By default, we regard $P$ as a subset of $\mathds{R}^d$. The empty set is considered a $(-1)$-polytope.

A \emph{face} of $P$ is either $P$ itself or a polytope $G = H \cap P$, where $H$ is some codimension-1 hyperplane that does not intersect the interior of $P$. A $0$- or $(d-1)$-dimensional face of $P$ is called a \emph{vertex} or \emph{facet}, respectively. For $-1 \leq k \leq d$, we denote by $f_k(P)$ the number of $k$-dimensional faces of $P$.
\end{definition}

\begin{definition}
Let $P$ be a $d$-polytope with vertex set $V$, and let $v \in \mathds{R}^d$ be a vector. We say $v$ is in \emph{general position with respect to $P$} if for all subsets $U \subset V$ such that $\dim \operatorname{aff}(U) < d$, $v$ is not parallel to $\operatorname{aff}(U)$.
\end{definition}

\begin{definition}
Let $P$ be a $d$-polytope and $G$ a $k$-dimensional face of $P$. The \emph{quotient polytope} $P/G$ is defined as $H \cap P$, where $H$ is a $(d-k-1)$-dimensional hyperplane intersecting exactly those faces of $P$ which properly contain $G$.
\end{definition}

\begin{definition}
A \emph{polytopal complex} $\mathcal{C}$ is a finite collection of polytopes with the following properties:
\begin{itemize}
    \item $\varnothing \in \mathcal{C}$.
    \item If $F \in \mathcal{C}$ and $G$ is a face of $F$, then $G \in \mathcal{C}$.
    \item If $F, G \in \mathcal{C}$, then $F \cap G$ is a face of both $F$ and $G$.
\end{itemize}
If $S \subset \mathcal{C}$, we denote with $\langle S \rangle$ the subcomplex of $\mathcal{C}$ generated by $S$. Additionally, if $P$ is a polytope, we denote by $\partial P$ the \emph{boundary complex} consisting of all proper faces of $P$.
\end{definition}

\begin{definition}
Let $P$ be a $d$-polytope. A \emph{polytopal subdivision} of $P$ is a polytopal complex $\mathcal{C}$ whose underlying space is $P$. A $d$-polytope belonging to $\mathcal{C}$ is called a \emph{cell}.
\end{definition}

\begin{definition}
For nonnegative integers $d$, a \emph{$d$-diagram} is a polytopal subdivision $\mathcal{C}$ of some $d$-polytope $P$ such that for all $G \in \mathcal{C}$, $G \cap \partial P$ is a face of $P$. An \emph{interior vertex} of $\mathcal{C}$ is a vertex $x \in \mathcal{C}$ which is not a vertex of $P$.
\end{definition}

We now give an elementary, but useful, result about $d$-diagrams.

\begin{lemma}
\label{d-diagrams}
Any $d$-diagram with $d \geq 1$ contains an interior vertex.
\end{lemma}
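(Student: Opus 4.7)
The plan is to argue by contradiction. Suppose $\mathcal{C}$ is a $d$-diagram with $d \geq 1$ in which every vertex is also a vertex of $P$; I will derive an impossibility.

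The first step is to choose a single $d$-dimensional cell $C \in \mathcal{C}$, which exists because $\mathcal{C}$ is a polytopal complex whose underlying space is the $d$-polytope $P$. Writing $V(C)$ for the vertex set of $C$, the contradiction hypothesis tells me each element of $V(C)$ is a vertex of $P$, so $V(C) \subseteq \partial P$. Combined with $V(C) \subseteq C$, this yields $V(C) \subseteq C \cap \partial P$.

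The second step invokes the defining property of a $d$-diagram: $C \cap \partial P$ is a single face $F$ of $P$. Because $C$ is a $d$-polytope, its vertices affinely span $\operatorname{aff}(C) = \operatorname{aff}(P) = \mathds{R}^d$, so the containment $V(C) \subseteq F$ forces $\dim F \geq d$ and hence $F = P$. But then $P = F \subseteq \partial P$, which is absurd for $d \geq 1$ since $P$ has nonempty interior. Thus some vertex of $\mathcal{C}$ is not a vertex of $P$, i.e., $\mathcal{C}$ has an interior vertex.

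I do not expect a serious obstacle. The argument rests on (i) the existence of at least one $d$-cell in any polytopal subdivision of a $d$-polytope, (ii) the elementary fact that a $d$-polytope's vertices affinely span its ambient space, and (iii) the crucial feature distinguishing a $d$-diagram from an arbitrary polytopal subdivision --- that $C \cap \partial P$ is a \emph{single} face of $P$ rather than a general subcomplex of $\partial P$. Without (iii) the argument collapses, which is what makes the $d$-diagram hypothesis essential to the lemma.
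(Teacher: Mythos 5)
Your proof is correct and follows essentially the same route as the paper's: both hinge on picking a $d$-cell $C$, using the $d$-diagram property that $C \cap \partial P$ is a face of $P$, and observing that a full-dimensional cell cannot have all its vertices inside that (necessarily lower-dimensional) face. The paper argues directly that some vertex of $C$ escapes $C \cap \partial P$, while you phrase it as a contradiction via the affine span of $V(C)$; the content is the same.
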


\begin{proof}
Let $\mathcal{C}$ be a $d$-diagram with $d \geq 1$, and let $P$ be the $d$-polytope of which $C$ is a subdivision. Let $G$ be a cell of $\mathcal{C}$, so $G \cap \partial P$ is a face of $P$. Note that $G \neq P$, because $P \cap \partial P = \partial P$ is not a face of $P$.

Since $\dim G = \dim P = d$, we know $G$ is not a face of $P$, so $G \cap \partial P \neq G$. It follows that $G$ contains some vertex $x \notin G \cap \partial P$. By definition, $x$ is an interior vertex of $\mathcal{C}$.
\end{proof}

\subsection{Solid Angles}
\label{subs:angles}

Next we discuss solid angles, a higher-dimensional analogue for the familiar angles of polygons. Solid angles are a crucial ingredient in proving our main result.

\begin{definition}
Let $P$ be a $d$-polytope and $G$ a nonempty face of $P$. We define the \emph{solid angle of $P$ at $G$}, denoted $\varphi(P, G)$, as follows. Let $B$ be an open ball centered on $G$, sufficiently small as to intersect only those faces of $P$ which contain $G$. Then
\[
\varphi(P, G) = \frac{m(B \cap P)}{m(B)},
\]
where $m$ is the volume function. For $0 \leq k \leq d$, we denote by $\varphi_k(P)$ the sum of solid angles of $P$ at all $k$-dimensional faces. That is, if $\mathcal{G}_k$ is the set of all $k$-dimensional faces of $P$, then
\[
\varphi_k(P) = \sum_{G \in \mathcal{G}_k} \varphi(P, G).
\]
\end{definition}

We give two essential results on solid angles by Perles and Shephard. The first places a lower bound on the $k^\text{th}$ angle sum using orthogonal projections. The second formalizes our intuition about the positive curvature of polytopes.

\begin{theorem}[{Perles and Shephard \cite[(23)]{perles67}}]
\label{perles}
Let $P$ be a $d$-polytope and $0 \leq k \leq d-1$. For all vectors $v \in \mathds{R}^d$ in general position with respect to $P$, let $H_v \subset \mathds{R}^d$ be a codimension-1 hyperplane orthogonal to $v$, and let $\pi_v:\mathds{R}^d \to H_v$ be the orthogonal projection map. Then
\[
\varphi_k(P) \geq \frac{1}{2}\Bigl[f_k(P) - \max_v f_k(\pi_v(P))\Bigr].
\]
\end{theorem}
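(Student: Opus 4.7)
The plan is a direct averaging argument over random unit directions. For a generic $v \in \mathds{R}^d$, I would classify each $k$-face $G$ of $P$ as \emph{strictly upper} if every facet $F \supset G$ has $n_F \cdot v > 0$ (where $n_F$ is the outward unit normal), \emph{strictly lower} if every such facet has $n_F \cdot v < 0$, and a \emph{shadow boundary} face otherwise; for generic $v$ these three classes partition the $k$-faces, and I denote their sizes $|U_k(v)|, |L_k(v)|, |S_k(v)|$.

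The probabilistic input is the identification $\varphi(P, G) = \omega(T_G \cap S^{d-1})$, where $T_G$ is the $d$-dimensional tangent cone of $P$ at a relative interior point of $G$ and $\omega$ is normalized spherical measure; this holds because $P$ locally agrees with $T_G$ near such a point. Since the outward normal cone $N(G)$ is the polar of $T_G$, the condition that every facet containing $G$ has $n_F \cdot v > 0$ is equivalent to $-v$ lying in the interior of $T_G$. Hence if $V$ is uniform on $S^{d-1}$, $\Pr(G \in U_k(V)) = \Pr(-V \in T_G) = \varphi(P, G)$, and likewise $\Pr(G \in L_k(V)) = \varphi(P, G)$. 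Summing over the $k$-faces of $P$ gives $\mathds{E}[|U_k(V)|] = \mathds{E}[|L_k(V)|] = \varphi_k(P)$.

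The geometric input is that for generic $v$ and $0 \leq k \leq d-2$, the map $G \mapsto \pi_v(G)$ restricts to a bijection from shadow boundary $k$-faces of $P$ onto the $k$-faces of $\pi_v(P)$, all of which lie on $\partial \pi_v(P)$ since $k \leq d-2$. This yields $f_k(\pi_v(P)) = |S_k(v)| = f_k(P) - |U_k(v)| - |L_k(v)|$, so combined with the expected-value computation,
\[
\mathds{E}[f_k(\pi_V(P))] = f_k(P) - 2\varphi_k(P).
\]
Since $\max_v f_k(\pi_v(P)) \geq \mathds{E}[f_k(\pi_V(P))]$, rearranging gives the desired inequality. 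The case $k = d-1$ is immediate, as $\varphi_{d-1}(P) = f_{d-1}(P)/2$ and $f_{d-1}(\pi_v(P)) = 1$. The main obstacle is the geometric bijection in the third paragraph: I would need to use the general-position hypothesis on $v$ carefully to ensure that distinct shadow boundary $k$-faces project to distinct $k$-faces of $\pi_v(P)$ and that every such $k$-face of $\pi_v(P)$ arises in this way.
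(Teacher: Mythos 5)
The paper does not prove this statement; it cites it verbatim from Perles and Shephard, so there is no in-paper proof to compare against. Your averaging argument is correct and is, as far as I can tell, essentially the route Perles and Shephard themselves take. A few checks on the details: the identification $\varphi(P,G) = \omega(T_G \cap S^{d-1})$ is correct since the tangent cone approximates $P$ to first order at a relative interior point of $G$; the equivalence ``every facet $F \supset G$ has $n_F \cdot v > 0$ iff $-v \in \operatorname{int} T_G$'' follows because $T_G$ is the polar cone of $N(G) = \operatorname{cone}\{n_F : F \supset G\}$; and the general-position hypothesis in the statement (no subset of vertices spanning a proper affine subspace parallel to $v$) does guarantee both that $n_F \cdot v \neq 0$ for every facet $F$ and that $v \notin \operatorname{lin}(G)$ for every proper face $G$, which is exactly what the partition and the dimension count need.

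On the shadow-boundary bijection, which you rightly flag as the crux: for $v$ in general position and $0 \leq k \leq d-2$, a $k$-face $G$ lies on the shadow boundary iff $\operatorname{relint} N(G)$ meets $v^{\perp}$ (relative interior points approach each extreme ray $n_F$, and having rays on both sides of $v^{\perp}$ forces an interior intersection since $v \notin \operatorname{lin}(G)$ ensures $\operatorname{lin} N(G) \not\subset v^\perp$). Choosing $c \in \operatorname{relint} N(G) \cap v^\perp$ exhibits $\pi_v(G)$ as the face of $\pi_v(P)$ maximized by $c$, of dimension $k$ by general position; conversely the face of $P$ maximizing a functional $c \in v^\perp$ exposing a $k$-face of $\pi_v(P)$ is a shadow boundary $k$-face, and injectivity follows since $\pi_v$ restricted to the shadow boundary complex is a homeomorphism onto $\partial \pi_v(P)$. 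So the gap you flag does close, and in fact you get the stronger identity $\mathds{E}[f_k(\pi_V(P))] = f_k(P) - 2\varphi_k(P)$, with the stated inequality following from $\max \geq \mathds{E}$.
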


\begin{theorem}[{Shephard \cite[Theorem 3]{shephard68}}]
\label{curvature}
Let $P$ be a $d$-polytope and $G$ a $k$-dimensional face of $P$, where $0 \leq k \leq d-2$. Let $\mathcal{F}_G$ be the set of facets of $P$ containing $G$. Then
\[
\sum_{F \in \mathcal{F}_G} \varphi(F, G) \leq 1,
\]
with equality if and only if $k = d-2$.
\end{theorem}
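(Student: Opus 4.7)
My strategy is to reduce the theorem to a spherical perimeter bound on the normal cone of $P$ at $G$ and then prove that bound inductively on $n := d - k \geq 2$.

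Fix $p \in \operatorname{relint}(G)$. Near $p$, the polytope $P$ factors locally as $T_G \times C$, where $T_G$ is the affine hull of $G$ and $C \subset N_G \cong \mathds{R}^n$ is a pointed polyhedral cone (the ``normal cone'' of $P$ at $G$). The facets in $\mathcal{F}_G$ are in bijection with the facets $\hat F$ of $C$, each spanning a linear hyperplane $L_F$. A Fubini computation exploiting the rotational symmetry of the ball gives
\[
\varphi(F, G) \;=\; \frac{m_{n-2}(\hat F \cap S^{n-1})}{m(S^{n-2})},
\]
namely the fraction of the great $(n-2)$-subsphere $L_F \cap S^{n-1}$ lying in $\hat F$. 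Writing $\sigma := C \cap S^{n-1}$---a convex spherical $(n-1)$-polytope contained in an open hemisphere of $S^{n-1}$ since $C$ is pointed---the theorem becomes
\[
m_{n-2}(\partial \sigma) \;\leq\; m(S^{n-2}),
\]
with equality iff $n = 2$.

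The base case $n = 2$ is immediate: $\sigma$ is a proper arc on $S^1$, and $\partial \sigma$ consists of two points of total $S^0$-measure $2 = m(S^0)$. For the inductive step ($n \geq 3$) I would apply a spherical Cauchy--Crofton identity: for any rectifiable $(n-2)$-dimensional $\Sigma \subset S^{n-1}$,
\[
m_{n-2}(\Sigma) \;=\; \alpha_n \int_E m_{n-3}(E \cap \Sigma) \, d\mu(E),
\]
where $E$ ranges over great $(n-2)$-subspheres of $S^{n-1}$ with the natural invariant measure $d\mu$, and the constant $\alpha_n$ is pinned down by calibrating against $\Sigma$ equal to a great $(n-2)$-subsphere itself. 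Applied with $\Sigma = \partial \sigma$, each slice $E \cap \sigma$ is either empty or a convex spherical polytope in an open hemisphere of $E \cong S^{n-2}$ (the hemisphere determined by orthogonally projecting an interior direction of $C$ onto $E$), so the inductive hypothesis gives $m_{n-3}(E \cap \partial \sigma) \leq m(S^{n-3})$ for every nonempty slice. Substituting into the Cauchy--Crofton identity and using the calibration of $\alpha_n$ yields $m_{n-2}(\partial \sigma) \leq m(S^{n-2})$, with strict inequality when $n \geq 3$ because a positive-measure family of $E$'s misses $\sigma$ entirely.

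The main obstacle is justifying the spherical Cauchy--Crofton identity with the correct normalization and verifying that the inductive hypothesis transfers cleanly under slicing---in particular that the slices $E \cap \sigma$ really remain in open hemispheres of $E$. These are integral-geometric bookkeeping tasks rather than conceptual barriers, but they require care; if they prove stubborn, a fallback would be to replace the induction by a direct projection argument using the Cauchy surface-area formula on $S^{n-1}$ and the polar-duality structure of $C$.
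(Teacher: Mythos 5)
The paper does not prove this statement; it cites it directly from Shephard \cite[Theorem 3]{shephard68}, so there is no in-paper argument to compare against. Assessing your proposal on its own merits: the approach is sound and essentially complete. The local factorization of $P$ near $\operatorname{relint}(G)$ as $T_G \times C$, with $C$ a pointed, full-dimensional polyhedral cone in $\mathds{R}^n$ (where $n = d-k \geq 2$), and the Fubini identification $\varphi(F,G) = m_{n-2}(\hat F \cap S^{n-1})/m(S^{n-2})$ are both correct, and they reduce the claim to the classical fact that a convex spherical $(n-1)$-polytope $\sigma$ lying in an open hemisphere of $S^{n-1}$ has spherical perimeter $m_{n-2}(\partial\sigma) \leq m(S^{n-2})$, with equality exactly when $n=2$. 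The spherical Crofton induction then goes through as you describe: for $\mu$-almost every great $(n-2)$-subsphere $E$ meeting $\sigma$, the slice $E\cap\sigma$ is a full-dimensional convex spherical polytope contained in the open hemisphere $E \cap H$ of $E$ (where $H \supset \sigma$ is the ambient open hemisphere and $E \cap H$ is a genuine open hemisphere of $E$ unless $E$ is the null set with normal parallel to $H$'s pole), and $E \cap \partial\sigma = \partial_E(E\cap\sigma)$ for almost every $E$. Combining the inductive hypothesis on each slice with the calibration $\alpha_n \mu(\text{all }E) = m(S^{n-2})/m(S^{n-3})$ gives the inequality, and the compactness of $\sigma$ inside an open hemisphere forces a positive-$\mu$-measure family of $E$'s to miss $\sigma$, giving strictness whenever $n \geq 3$. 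The two items you flag as needing care are genuinely routine: the normalization of the spherical Crofton identity follows from $O(n)$-invariance plus a single calibration, and the exceptional $E$ (those containing a face span of $C$, or orthogonal to the pole of $H$) form a $\mu$-null set. So the proposal constitutes a correct, self-contained route to Shephard's bound.
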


\subsection{A Combinatorial Lemma}
\label{subs:combinatorics}

Finally, we prove a lemma about the convexity of binomial coefficients.

\begin{lemma}
\label{convex}
For all nonnegative integers $a, b, c$,
\[
{a \choose c} + {b \choose c} \geq {\lceil \frac{a+b}{2} \rceil \choose c} + {\lfloor \frac{a+b}{2} \rfloor \choose c}.
\]
\end{lemma}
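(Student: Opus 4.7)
The plan is to prove the inequality by a discrete smoothing argument, showing that the extremal configuration with $a$ and $b$ as close as possible minimizes the sum $\binom{a}{c}+\binom{b}{c}$.

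Without loss of generality, assume $a \leq b$. If $b - a \leq 1$, then $\{a,b\} = \{\lfloor (a+b)/2 \rfloor, \lceil (a+b)/2 \rceil\}$ and the inequality holds with equality. Otherwise $b \geq a+2$, and the key claim is that the replacement $(a,b) \mapsto (a+1, b-1)$ does not increase the sum; that is,
\[
\binom{a}{c} + \binom{b}{c} \;\geq\; \binom{a+1}{c} + \binom{b-1}{c}.
\]
Rearranging, this is equivalent to $\binom{b}{c} - \binom{b-1}{c} \geq \binom{a+1}{c} - \binom{a}{c}$. The case $c=0$ is trivial since both sides of the original inequality equal $2$; for $c \geq 1$, Pascal's identity converts this to
\[
\binom{b-1}{c-1} \;\geq\; \binom{a}{c-1},
\]
which follows immediately from the monotonicity of $\binom{n}{c-1}$ in $n$, since $b - 1 \geq a + 1 > a$.

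With this ``one-step smoothing'' lemma in hand, I would finish by induction on $b - a$. Starting from $(a,b)$ and iterating the replacement, after $\lfloor (b-a)/2 \rfloor$ steps the pair becomes $(\lfloor (a+b)/2 \rfloor, \lceil (a+b)/2 \rceil)$, and the sum $\binom{\cdot}{c}+\binom{\cdot}{c}$ is weakly decreasing throughout. This chain of inequalities yields the desired bound.

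There is no real obstacle here; the only subtlety is that Pascal's identity requires $c \geq 1$, which is why the $c=0$ case is dispatched separately at the start. The argument is essentially a discrete analogue of the statement that $x \mapsto \binom{x}{c}$ is convex, applied via a midpoint-balancing operation.
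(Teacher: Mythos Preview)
Your proof is correct and follows essentially the same smoothing argument as the paper: both establish the one-step inequality via Pascal's identity and then iterate to reach the balanced pair. The only cosmetic difference is that you handle $c=0$ separately, whereas the paper's computation $\binom{a-1}{c-1}-\binom{b}{c-1}\geq 0$ implicitly covers it via the convention $\binom{n}{-1}=0$.
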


\begin{proof}
Without loss of generality, suppose $a \geq b$. If $a - b > 1$,
\[
\biggl[{a \choose c} + {b \choose c}\biggr] - \biggl[{a-1 \choose c} + {b+1 \choose c}\biggr] = {a-1 \choose c-1} - {b \choose c-1} \geq 0.
\]
Thus,
\[
{a \choose c} + {b \choose c} \geq {a-1 \choose c} + {b+1 \choose c} \geq {a-2 \choose c} + {b+2 \choose c} \geq \cdots \geq {\lceil \frac{a+b}{2} \rceil \choose c} + {\lfloor \frac{a+b}{2} \rfloor \choose c}.\qedhere
\]
\end{proof}


\section{Main Result}
\label{s:result}

In this section, we prove our main theorem on face numbers and consequently answer B\'ar\'any's question. The key to our proof is the following proposition about angle sums. Recall that for all integers $0 \leq k < d$, we define
\[
\rho(d, k) = \frac{1}{2}\biggl[{\lceil \frac{d}{2} \rceil \choose k} + {\lfloor \frac{d}{2} \rfloor \choose k}\biggr].
\]

\begin{proposition}
\label{angles}
For all $(d-1)$-polytopes $Q$ and all $0 \leq k \leq d-2$,
\[
\varphi_k(Q) \geq \rho(d, d-k-1).
\]
\end{proposition}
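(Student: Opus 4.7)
My plan is a case analysis on $k$, with the facet case handled directly and the lower-dimensional cases reduced via Theorem \ref{perles}. When $k = d-2$, each facet $G$ of $Q$ has solid angle $\varphi(Q, G) = 1/2$, since a small ball at an interior point of $G$ is bisected by its supporting hyperplane. Summing gives $\varphi_{d-2}(Q) = f_{d-2}(Q)/2 \geq d/2 = \rho(d, 1)$, using the fact that every $(d-1)$-polytope has at least $d$ facets.

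For $0 \leq k \leq d - 3$, I will apply Theorem \ref{perles} together with a structural argument about a well-chosen projection direction. Fix any vertex $x$ of $Q$ and pick $v$ in the (nonempty) interior of the dual of the normal cone $N_x$, and also in general position with respect to $Q$. Then every facet $F$ containing $x$ satisfies $v \cdot n_F > 0$, i.e.\ $F$ is \emph{upper} with respect to $v$, so every face of $Q$ containing $x$ lies only in upper facets and is therefore not \emph{equatorial} (contained in both some upper and some lower facet). A standard projection argument shows that the $j$-faces of $\pi_v(Q)$ for $j \leq d-3$ are in bijection with the equatorial $j$-faces of $Q$, so $f_k(Q) - f_k(\pi_v(Q))$ is precisely the count of non-equatorial $k$-faces. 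The star of $x$ in $Q$ contributes at least $f_{k-1}(Q_x)$ of these, and since the vertex figure $Q_x$ is a $(d-2)$-polytope the simplex lower bound gives $f_{k-1}(Q_x) \geq \binom{d-1}{k}$.

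Combined with Theorem \ref{perles}, this yields $\varphi_k(Q) \geq \tfrac{1}{2}\binom{d-1}{k}$. The final step applies Lemma \ref{convex} with $a = d-1$, $b = 1$, and $c = d-k-1$: since $c \geq 2$ in this range, $\binom{1}{c} = 0$, and the lemma gives $\binom{d-1}{d-k-1} \geq \binom{\lceil d/2 \rceil}{d-k-1} + \binom{\lfloor d/2 \rfloor}{d-k-1} = 2\rho(d, d-k-1)$. Combined with the symmetry $\binom{d-1}{d-k-1} = \binom{d-1}{k}$, this produces the bound $\varphi_k(Q) \geq \rho(d, d-k-1)$. The hard part will be cleanly establishing the bijection between $(\leq d-3)$-dimensional faces of $\pi_v(Q)$ and equatorial faces of $Q$ (faces of $\pi_v(Q)$ arise from supporting hyperplanes of $Q$ parallel to $v$, which meet $Q$ in a face whose normal cone straddles $v^\perp$), together with confirming that the star of a purely upper vertex consists entirely of non-equatorial faces; both require bookkeeping on the upper/lower partition of facets but no genuinely new ideas.
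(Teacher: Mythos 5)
Your $k=d-2$ case is fine, but there is a fatal error in the case $0 \le k \le d-3$, namely a misapplication of Theorem \ref{perles}. The theorem bounds $\varphi_k$ from below by $\tfrac12\bigl[f_k(Q)-\max_v f_k(\pi_v(Q))\bigr]$, where the maximum is over \emph{all} directions $v$ in general position; equivalently, by $\tfrac12 \min_v \bigl[f_k(Q)-f_k(\pi_v(Q))\bigr]$. You only estimate $f_k(Q)-f_k(\pi_v(Q))$ for one carefully chosen $v$ (one in the interior of $(N_x)^*$, forcing every facet through $x$ to be upper), which produces a lower bound for that single term, not for the minimum. To invoke the theorem you would instead need to show that for \emph{every} $v$ in general position there is a vertex of $Q$ all of whose facets lie on one side --- and this is simply false: there exist $v$ for which every vertex of $Q$ is equatorial. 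Concretely, take $Q$ a regular tetrahedron ($d=4$, $k=1$). For most directions $v$ the shadow $\pi_v(Q)$ is a quadrilateral, so all four vertices of $Q$ lie on the shadow boundary and your argument finds no admissible vertex. Your conclusion $\varphi_k(Q)\ge \tfrac12\binom{d-1}{k}$ would give $\varphi_1\ge \tfrac32$ here, but the actual value is $6\cdot\frac{\arccos(1/3)}{2\pi}\approx 1.18$, which also confirms the bound is false, not merely unproved. (The proposition's claim $\varphi_1\ge\rho(4,2)=1$ is satisfied.)

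The paper avoids this trap precisely by working with an \emph{arbitrary} $v$ and constructing the common refinement $\mathcal{Q}_v$ of the two projected subdivisions. Lemma \ref{d-diagrams} guarantees $\mathcal{Q}_v$ has an interior vertex, but that interior vertex is generally $\pi_v(X^+)\cap\pi_v(X^-)$ with $\dim X^+ + \dim X^- = d-2$, so $X^+$ and $X^-$ are typically positive-dimensional faces rather than vertices of $Q$. This is exactly what happens in the tetrahedron example: the interior vertex of the refinement comes from two crossing edges, not from a vertex. Counting $k$-faces containing $X^+$ and $X^-$ then yields $\binom{\ell^-+1}{d-k-1}+\binom{\ell^++1}{d-k-1}$ with $\ell^++\ell^-=d-2$, and Lemma \ref{convex} applied with $a=\ell^++1$, $b=\ell^-+1$, $c=d-k-1$ gives exactly $2\rho(d,d-k-1)$ --- a weaker, but correct, bound. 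Your invocation of Lemma \ref{convex} with $a=d-1$, $b=1$ corresponds to the extreme split $\ell^+=d-2$, $\ell^-=0$, which is not guaranteed to occur.
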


\begin{proof}
Let $Q$ be a $(d-1)$-polytope and $0 \leq k \leq d-2$. If $d=2$, then $k=0$, so $\varphi_k(Q) = \rho(d, d-k-1) = 1$. For the remainder of this proof, we will assume $d \geq 3$.

For each vector $v \in \mathds{R}^{d-1}$ in general position with respect to $Q$, let $H_v$ be a codimension-1 hyperplane orthogonal to $v$. Define $\pi_v:\mathds{R}^{d-1} \to H_v$ to be the orthogonal projection map.

Let $\mathcal{F}$ be the set of facets of $Q$. For each $F \in \mathcal{F}$, let $u_F$ be the outer unit normal of $F$. Define subcomplexes $Q_v^+, Q_v^- \subset \partial Q$ as follows:
\begin{align*}
    Q_v^+ &= \langle F \in \mathcal{F} \mid v \cdot u_F > 0 \rangle,\\
    Q_v^- &= \langle F \in \mathcal{F} \mid v \cdot u_F < 0 \rangle.
\end{align*}
Then $Q_v^+ \cup Q_v^- = \partial Q$. Furthermore, $\pi_v|_{Q_v^+}$ and $\pi_v|_{Q_v^-}$ are homeomorphisms onto $\pi_v(Q)$. Since $\partial Q_v^+ = \partial Q_v^- = Q_v^+ \cap Q_v^-$, it follows that $\pi_v|_{Q_v^+ \cap Q_v^-}$ is a homeomorphism onto $\partial \pi_v(Q)$.

Both $\{\pi_v(G) \mid G \in Q_v^+\}$ and $\{\pi_v(G) \mid G \in Q_v^-\}$ are polytopal subdivisions of $\pi_v(Q)$ (see \cite[Definition 5.3]{ziegler95}). Let $\mathcal{Q}_v$ be the smallest common refinement of these two subdivisions:
\[
\mathcal{Q}_v = \{\pi_v(G^+) \cap \pi_v(G^-) \mid G^+ \in Q_v^+, G^- \in Q_v^-\}.
\]
We claim that $\mathcal{Q}_v$ is a $(d-2)$-diagram.

First, we will show that $\mathcal{Q}_v$ is a polytopal subdivision of $\pi_v(Q)$. It is clear that each element of $\mathcal{Q}_v$ is a polytope, and that $\varnothing = \pi_v(\varnothing) \in \mathcal{Q}_v$. Let $G = \pi_v(G^+) \cap \pi_v(G^-) \in \mathcal{Q}_v$, where $G^+ \in Q_v^+$ and $G^- \in Q_v^-$. Then for each face $I$ of $G$, $I = \pi_v(I^+) \cap \pi_v(I^-)$ for some faces $I^+$ of $G^+$ and $I^-$ of $G^-$. Thus, all faces of $G$ belong to $\mathcal{Q}_v$.

Now, let $G_1 = \pi_v(G_1^+) \cap \pi_v(G_1^-) \in \mathcal{Q}_v$ and $G_2 = \pi_v(G_2^+) \cap \pi_v(G_2^-) \in \mathcal{Q}_v$, where $G_1^+, G_2^+ \in Q_v^+$ and $G_1^-, G_2^- \in Q_v^-$. Since $\pi_v$ maps both $Q_v^+$ and $Q_v^-$ homeomorphically onto $\pi_v(Q)$, we know $\pi_v(G_1^+ \cap G_2^+) = \pi_v(G_1^+) \cap \pi_v(G_2^+)$ and $\pi_v(G_1^- \cap G_2^-) = \pi_v(G_1^-) \cap \pi_v(G_2^-)$. It follows that
\[
G_1 \cap G_2 = \pi_v(G_1^+) \cap \pi_v(G_1^-) \cap \pi_v(G_2^+) \cap \pi_v(G_2^-) = \pi_v(G_1^+ \cap G_2^+) \cap \pi_v(G_1^- \cap G_2^-).
\]
We know $G_1^+ \cap G_2^+$ is a face of $G_1^+$, so $\pi_v(G_1^+ \cap G_2^+)$ is a face of $\pi_v(G_1^+)$. Likewise, $G_1^- \cap G_2^-$ is a face of $G_1^-$, so $\pi_v(G_1^- \cap G_2^-)$ is a face of $\pi_v(G_1^-)$. It follows that $G_1 \cap G_2$ is a face of $\pi_v(G_1^+) \cap \pi_v(G_1^-) = G_1$. A similar argument shows that $G_1 \cap G_2$ is a face of $G_2$. We may therefore conclude that $\mathcal{Q}_v$ is a polytopal subdivision of $\pi_v(Q)$.

Suppose as before that $G = \pi_v(G^+) \cap \pi_v(G^-) \in \mathcal{Q}_v$, where $G^+ \in Q_v^+$ and $G^- \in Q_v^-$. Since $\pi_v$ maps $Q_v^+, Q_v^-$ homeomorphically onto $\pi_v(Q)$ and maps $Q_v^+ \cap Q_v^-$ homeomorphically onto $\partial \pi_v(Q)$, we can observe that
\begin{align*}
    \pi_v(G^+ \cap G^-) &= \pi_v((G^+ \cap Q_v^-) \cap (G^- \cap Q_v^+)) = \pi_v(G^+ \cap Q_v^-) \cap \pi_v(G^- \cap Q_v^+),\\
    \pi_v(G^+ \cap Q_v^-) &= \pi_v(G^+ \cap (Q_v^+ \cap Q_v^-)) = \pi_v(G^+) \cap \partial \pi_v(Q),\\
    \pi_v(G^- \cap Q_v^+) &= \pi_v(G^- \cap (Q_v^+ \cap Q_v^-)) = \pi_v(G^-) \cap \partial \pi_v(Q).
\end{align*}
Thus,
\[
G \cap \partial \pi_v(Q) = \pi_v(G^+) \cap \pi_v(G^-) \cap \partial \pi_v(Q) = \pi_v(G^+ \cap Q_v^-) \cap \pi_v(G^- \cap Q_v^+) = \pi_v(G^+ \cap G^-).
\]
Since $G^+ \cap G^- \in Q_v^+ \cap Q_v^-$ is a face of $Q$, it follows that $G \cap \partial \pi_v(Q) = \pi_v(G^+ \cap G^-)$ is a face of $\pi_v(Q)$. This verifies our claim that $\mathcal{Q}_v$ is a $(d-2)$-diagram.

By Lemma \ref{d-diagrams}, $\mathcal{Q}_v$ contains an interior vertex $x$. Let $x = \pi_v(X^+) \cap \pi_v(X^-)$, where $X^+ \in Q_v^+$ and $X^- \in Q_v^-$ (Figure \ref{fig1}). We know $x \notin \partial \pi_v(Q)$, so $X^+, X^- \notin Q_v^+ \cap Q_v^-$. It follows that for any face $G$ of $Q$, if $X^+ \subset G$, then $G$ is a face of $Q_v^+$ but not a face of $Q_v^-$. Similarly, if $X^- \subset G$, then $G$ is a face of $Q_v^-$ but not a face of $Q_v^+$.

\begin{figure}
\centering
\tdplotsetmaincoords{65}{75}
\begin{tikzpicture}[line cap=round, line join=round, tdplot_main_coords, scale=3, line width = 1.25pt, label distance=-4pt]
\def\x{3};
\def\y{2};
\def\z{2};
\pgfmathsetmacro\m{\x^2+\y^2+\z^2}
\def\d{2.5};

\coordinate (000) at (\d, 0, 0);
\pgfmathsetmacro\dotooo{(\d*\x)/\m};
\coordinate (000v) at ({\d - \dotooo*\x}, {-\dotooo*\y}, {-\dotooo*\z});
\coordinate (001) at (\d, 0, 1);
\pgfmathsetmacro\dotool{(\d*\x + \z)/\m};
\coordinate (001v) at ({\d - \dotool*\x}, {-\dotool*\y}, {1-\dotool*\z});
\coordinate (010) at (\d, 1, 0);
\pgfmathsetmacro\dotolo{(\d*\x + \y)/\m};
\coordinate (010v) at ({\d - \dotolo*\x}, {1-\dotolo*\y}, {-\dotolo*\z});
\coordinate (011) at (\d, 1, 1);
\pgfmathsetmacro\dotoll{(\d*\x + \y + \z)/\m};
\coordinate (011v) at ({\d - \dotoll*\x}, {1-\dotoll*\y}, {1-\dotoll*\z});
\coordinate (100) at (\d+1, 0, 0);
\pgfmathsetmacro\dotloo{(\d*\x + \x)/\m};
\coordinate (100v) at ({\d+1 - \dotloo*\x}, {-\dotloo*\y}, {-\dotloo*\z});
\coordinate (101) at (\d+1, 0, 1);
\pgfmathsetmacro\dotlol{(\d*\x + \x + \z)/\m};
\coordinate (101v) at ({\d+1 - \dotlol*\x}, {-\dotlol*\y}, {1-\dotlol*\z});
\coordinate (110) at (\d+1, 1, 0);
\pgfmathsetmacro\dotllo{(\d*\x + \x + \y)/\m};
\coordinate (110v) at ({\d+1 - \dotllo*\x}, {1-\dotllo*\y}, {-\dotllo*\z});
\coordinate (111) at (\d+1, 1, 1);
\pgfmathsetmacro\dotlll{(\d*\x + \x + \y + \z)/\m};
\coordinate (111v) at ({\d+1 - \dotlll*\x}, {1-\dotlll*\y}, {1-\dotlll*\z});

\path[fill=blue, opacity = 0.6] (011) -- (010) -- (000) -- (001) -- cycle;
\path[fill=red, opacity = 0.6] (111) -- (011) -- (010) -- (110) -- cycle;
\path[fill=blue, opacity = 0.6] (110) -- (100) -- (000) -- (010) -- cycle;
\draw[dashed] (010) -- (110);
\draw[dashed] (000) -- (010);
\draw[dashed] (010) -- (011);
\path[fill=red, opacity = 0.6] (111) -- (110) -- (100) -- (101) -- cycle;
\path[fill=blue, opacity = 0.6] (101) -- (001) -- (000) -- (100) -- cycle;
\path[fill=red, opacity = 0.6] (111) -- (101) -- (001) -- (011) -- cycle;
\draw (111) -- (110) -- (100) -- (101) -- cycle;
\draw (101) -- (001) -- (000) -- (100);
\draw (001) -- (011) -- (111);
\draw[line width=3pt, shorten <=1pt, shorten >=1pt] (000) -- (001) node [midway,label=left:\large $X^+$]{};
\draw[line width=3pt, shorten <=1pt, shorten >=1pt] (101) -- (111) node [midway, label=below:\large $X^-$]{};
\draw (111v) -- (110v) -- (100v) -- (101v) -- cycle;
\draw (111v) -- (011v) -- (010v) -- (110v);
\draw (101v) -- (001v) -- (011v);
\draw (000v) -- (100v);
\draw (000v) -- (010v);
\draw (000v) -- (001v);
\path[name path=patha] (000v) -- (001v);
\path[name path=pathb] (101v) -- (111v);
\path [name intersections={of=patha and pathb,by=inter}];
\node [circle,fill=black,inner sep=2.5pt, label=below left:\large $x$] at (inter) {};
\node[right=1pt,label=right:\huge \textcolor{red}{$Q_v^-$}] at (111){};
\node[left=1pt,label=left:\huge \textcolor{blue}{$Q_v^+$}] at (000){};
\path (001v) -- (011v) node [above=10pt, midway, label=above:\Huge $\mathcal{Q}_v$]{};
\path (100) -- (110) node [below=10pt, midway, label=below left:\Huge $Q$]{};

\end{tikzpicture}
\caption{The $(d-2)$-diagram $\mathcal{Q}_v$ and an interior vertex $x = \pi_v(X^+) \cap \pi_v(X^-)$.}
\label{fig1}
\end{figure}
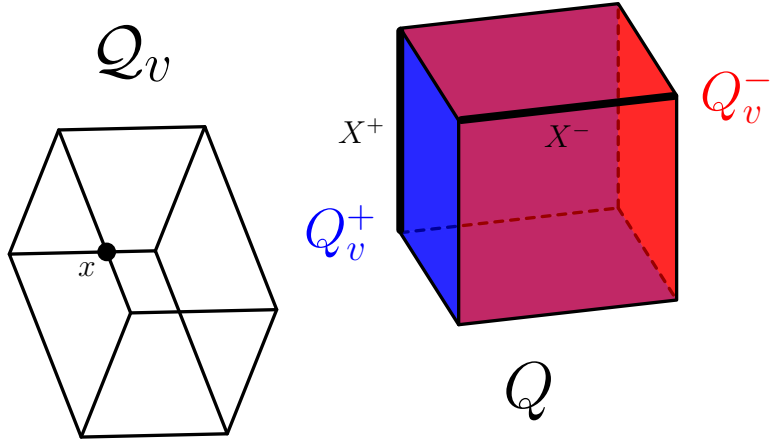

Let $\ell^+ = \dim X^+$ and $\ell^- = \dim X^-$. We can see that $\ell^+ + \ell^- \leq d-2$. Because $\pi_v(X^+)$ and $\pi_v(X^-)$ intersect nontrivially, we know $v$ is parallel to $\operatorname{aff}(X^+ \cup X^-)$, meaning $\operatorname{aff}(X^+ \cup X^-) = \mathds{R}^{d-1}$. Thus, $\ell^+ + \ell^- = d-2$. It follows that
\begin{gather*}
    \dim Q/X^+ = d - \ell^+ - 2 = \ell^-,\\
    \dim Q/X^- = d - \ell^- - 2 = \ell^+.
\end{gather*}
As a result,
\begin{gather*}
    f_{k - \ell^+ - 1}(Q/X^+) \geq {\ell^- + 1 \choose k - \ell^+} = {\ell^- + 1 \choose d - k - 1},\\
    f_{k - \ell^- - 1}(Q/X^-) \geq {\ell^+ + 1 \choose k - \ell^-} = {\ell^+ + 1 \choose d - k - 1}.
\end{gather*}
The above inequalities indicate that $Q$ has at least ${\ell^- + 1 \choose d - k - 1}$ $k$-dimensional faces containing $X^+$ and at least ${\ell^+ + 1 \choose d - k - 1}$ $k$-dimensional faces containing $X^-$. We have established that these two sets of faces are disjoint, so $Q$ has at least ${\ell^- + 1 \choose d - k - 1} + {\ell^+ + 1 \choose d - k - 1}$ $k$-dimensional faces not contained in $Q^+ \cap Q^-$. In other words,
\[
f_k(Q) - f_k(\pi_v(Q)) \geq {\ell^- + 1 \choose d - k - 1} + {\ell^+ + 1 \choose d - k - 1}.
\]
By Lemma \ref{convex}, it follows that for all $v \in \mathds{R}^{d-1}$ in general position with respect to $Q$,
\[
f_k(Q) - f_k(\pi_v(Q)) \geq {\lceil \frac{d}{2} \rceil \choose d-k-1} + {\lfloor \frac{d}{2} \rfloor \choose d-k-1} = 2\rho(d, d-k-1).
\]
Thus,
\[
f_k(Q) - \max_v f_k(\pi_v(Q)) \geq 2\rho(d, d-k-1).
\]
Finally, we can use Theorem \ref{perles} to conclude that
\[
\varphi_k(Q) \geq \frac{1}{2}\Bigl[f_k(P) - \max_v f_k(\pi_v(P))\Bigr] \geq \rho(d, d-k-1).\qedhere
\]
\end{proof}

Our main result follows.

\begin{theorem}
\label{bound}
Let $P$ be a convex $d$-polytope, and suppose $0 \leq k \leq d-1$. Then
\begin{align*}
\frac{f_k(P)}{f_0(P)} &\geq \rho(d, k),\\
\frac{f_k(P)}{f_{d-1}(P)} &\geq \rho(d, d-k-1).
\end{align*}
In the former, equality holds precisely when $k=0$ or when $k=1$ and $P$ is simple. In the latter, equality holds precisely when $k=d-1$ or when $k=d-2$ and $P$ is simplicial.
\end{theorem}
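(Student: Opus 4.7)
The plan is to prove the second inequality directly by summing Proposition \ref{angles} over the facets of $P$ and pairing it with Theorem \ref{curvature}; the first inequality will then follow by polar duality. The main obstacle I anticipate is not the main bound itself but the equality analysis at $k = d-2$, where one must identify exactly when Proposition \ref{angles} is tight on each individual facet.

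For the second inequality, fix $0 \leq k \leq d-2$ (the case $k = d-1$ is trivial, since $\rho(d, 0) = 1$). Each facet $F$ of $P$ is a $(d-1)$-polytope, so Proposition \ref{angles} yields $\varphi_k(F) \geq \rho(d, d-k-1)$. Summing over all facets and swapping the order of summation,
\[
f_{d-1}(P)\, \rho(d, d-k-1) \;\leq\; \sum_{F} \varphi_k(F) \;=\; \sum_{G} \sum_{F \supset G} \varphi(F, G) \;\leq\; f_k(P),
\]
where the outer sum in the middle runs over $k$-faces $G$ of $P$ and the final inequality is Theorem \ref{curvature} applied at each such $G$. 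Dividing by $f_{d-1}(P)$ gives the desired bound.

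For the equality analysis of the second inequality, observe that Theorem \ref{curvature} is strict whenever $k \leq d-3$, which rules out equality in that range. When $k = d-2$, Theorem \ref{curvature} is automatically an equality, so equality in the main bound reduces to $\varphi_{d-2}(F) = \rho(d, 1) = d/2$ for every facet $F$. Since each $(d-2)$-face of the $(d-1)$-polytope $F$ lies on a supporting hyperplane and hence contributes solid angle $1/2$ inside $F$, we have $\varphi_{d-2}(F) = f_{d-2}(F)/2$, which equals $d/2$ precisely when $F$ is a $(d-1)$-simplex. Thus equality at $k = d-2$ forces every facet of $P$ to be a simplex, i.e., $P$ is simplicial; the converse is a quick check using $\sum_F f_{d-2}(F) = 2 f_{d-2}(P)$.

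Finally, the first inequality and its equality statement follow by applying the second to the polar dual $P^*$: since $f_i(P^*) = f_{d-1-i}(P)$,
\[
\frac{f_k(P)}{f_0(P)} = \frac{f_{d-1-k}(P^*)}{f_{d-1}(P^*)} \;\geq\; \rho\bigl(d,\, d-(d-1-k)-1\bigr) = \rho(d, k),
\]
and, by the preceding paragraph applied to $P^*$ at index $d-1-k$, equality occurs exactly when $k = 0$, or when $k = 1$ and $P^*$ is simplicial (equivalently, $P$ is simple).
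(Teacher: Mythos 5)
Your proposal is correct and follows essentially the same route as the paper: you sum Proposition \ref{angles} over the facets, bound the double sum via Theorem \ref{curvature}, and then dualize, with the equality analysis at $k=d-2$ reducing to recognizing that $\varphi_{d-2}(F)=f_{d-2}(F)/2=d/2$ forces each facet $F$ to be a simplex. Your justification that $\varphi(F,G)=\tfrac12$ for a ridge $G$ of a facet $F$ is a slightly more explicit spelling-out of the step the paper phrases as ``each facet of $P$ contains exactly $d$ ridges,'' but the argument is the same.
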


\begin{proof}
Let $P$ be a $d$-polytope and $0 \leq k \leq d-1$. Let $\mathcal{F}$ be the set of facets of $P$, and let $\mathcal{G}$ be the set of $k$-dimensional faces of $P$. For all $F \in \mathcal{F}$ and $G \in \mathcal{G}$, if $G$ is not a face of $F$, we set $\varphi(F, G) = 0$.

By Theorem \ref{curvature}, for all $G \in \mathcal{G}$, $\sum_{F \in \mathcal{F}} \varphi(F, G) \leq 1$. Equality is achieved if and only if $k \geq d-2$ (trivially if $k = d-1$). Thus,
\[
\sum_{\substack{F \in \mathcal{F} \\ {G \in \mathcal{G}}}} \varphi(F, G) \leq |\mathcal{G}| = f_k(P),
\]
with equality if and only if $k \geq d-2$.

By Proposition \ref{angles}, for all $F \in \mathcal{F}$, $\sum_{G \in \mathcal{G}} \varphi(F, G) \geq \rho(d, d-k-1)$. Thus,
\[
\sum_{\substack{F \in \mathcal{F} \\ {G \in \mathcal{G}}}} \varphi(F, G) \geq \rho(d, d-k-1)|\mathcal{F}| = \rho(d, d-k-1)f_{d-1}(P).
\]
It follows that
\[
\frac{f_k(P)}{f_{d-1}(P)} \geq \rho(d, d-k-1).
\]
This inequality is guaranteed to be strict if $k < d-2$.

It remains to determine when equality is achieved for $k \geq d-2$. We can observe that $\rho(d, 0) = 1$ and $\rho(d, 1) = \frac{1}{2}(\lceil \frac{d}{2} \rceil + \lfloor \frac{d}{2} \rfloor) = \frac{d}{2}$. Thus, equality is trivial for $k=d-1$. If $k=d-2$, then equality holds if and only if $\frac{f_{d-2}(P)}{f_{d-1}(P)} = \frac{d}{2}$; equivalently, if and only if each facet of $P$ contains exactly $d$ ridges of $P$. This occurs precisely when $P$ is simplicial.

By duality, for all $0 \leq k \leq d-1$,
\[
\frac{f_k(P)}{f_0(P)} \geq \rho(d, k).
\]
Equality holds precisely when $k=0$ or when $k=1$ and $P$ is simple.
\end{proof}

Observe that if $k \leq \lfloor \frac{d}{2} \rfloor$, then $\rho(d, k) \geq 1$. Likewise, if $k \geq \lceil \frac{d}{2} \rceil - 1$, then $\rho(d, d-k-1) \geq 1$. We can therefore answer B\'ar\'any's question:

\begin{corollary}
\label{barany}
For all $d$-polytopes $P$ and all $0 \leq k \leq d-1$,
\[
f_k(P) \geq \min\{f_0(P), f_{d-1}(P)\}.
\]
In particular, $f_k(P) \geq f_0(P)$ for $k \leq \lfloor \frac{d}{2} \rfloor$, and $f_k(P) \geq f_{d-1}(P)$ for $k \geq \lceil \frac{d}{2} \rceil - 1$.
\end{corollary}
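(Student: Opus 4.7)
The plan is to derive Corollary \ref{barany} directly from Theorem \ref{bound}, which is already in hand. All that remains is to check, for the claimed range of $k$, that the relevant value of $\rho$ is at least $1$; the corollary is then immediate.

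First I would verify the elementary fact that $\rho(d,j) \geq 1$ whenever $0 \leq j \leq \lfloor d/2 \rfloor$. Since $j \leq \lfloor d/2 \rfloor \leq \lceil d/2 \rceil$, both binomial coefficients $\binom{\lceil d/2 \rceil}{j}$ and $\binom{\lfloor d/2 \rfloor}{j}$ are at least $1$, so their average $\rho(d,j)$ is at least $1$. (In fact for $1 \leq j \leq \lfloor d/2 \rfloor$ one gets $\rho(d,j) \geq d/2$ by the same reasoning used in Theorem \ref{bound} for $j=1$, but that is not needed here.)

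Next I would apply this to the two inequalities of Theorem \ref{bound}. If $0 \leq k \leq \lfloor d/2 \rfloor$, then $\rho(d,k) \geq 1$, so
\[
f_k(P) \geq \rho(d,k)\, f_0(P) \geq f_0(P).
\]
If $\lceil d/2 \rceil - 1 \leq k \leq d-1$, then $d-k-1 \leq d - \lceil d/2 \rceil = \lfloor d/2 \rfloor$, so $\rho(d,d-k-1) \geq 1$ and
\[
f_k(P) \geq \rho(d,d-k-1)\, f_{d-1}(P) \geq f_{d-1}(P).
\]
Since every $k$ with $0 \leq k \leq d-1$ lies in at least one of the two ranges $\{k \leq \lfloor d/2 \rfloor\}$ or $\{k \geq \lceil d/2 \rceil - 1\}$ (they overlap by one index when $d$ is odd, and meet at $\lfloor d/2 \rfloor = \lceil d/2 \rceil - 1 + 0$ when $d$ is even; in both cases the union is all of $\{0,\ldots,d-1\}$), we conclude $f_k(P) \geq \min\{f_0(P), f_{d-1}(P)\}$ for every such $k$.

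There is really no obstacle here: the substantive work has been absorbed into Theorem \ref{bound}, and the corollary is just a matter of bookkeeping on the binomial coefficients. The only care needed is to make sure the two ranges cover all of $\{0,\ldots,d-1\}$, which is a short check separating the parities of $d$.
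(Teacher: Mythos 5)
Your proof is correct and follows essentially the same route as the paper: both derive the corollary directly from Theorem \ref{bound} by noting that $\rho(d,k) \geq 1$ for $k \leq \lfloor d/2 \rfloor$ and $\rho(d,d-k-1) \geq 1$ for $k \geq \lceil d/2 \rceil - 1$, and that these two ranges cover $\{0,\ldots,d-1\}$. (The parenthetical about how the two ranges overlap has the parities swapped --- for $d$ even they overlap in two indices, for $d$ odd in one --- but this side remark does not affect the argument.)
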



\section{Acknowledgements}
The author would like to thank Isabella Novik for encouraging him to write this paper, as well as her incredible support and guidance throughout the writing process. The author would also like to thank Rowan Rowlands, Louis Billera, and Maria-Romina Ivan for their generous help in editing this paper.

\bibliography{bibliography}
\bibliographystyle{plain}

\end{document}